\newtheorem{assumption}{Assumption} 
\newtheorem{proposition}{Proposition} 
\newtheorem{lemma}{Lemma} 
\newtheorem{remark}{Remark}
\newcommand{\rank}{\ensuremath{\operatorname{rank}}}
\newcommand{\Argmind}[2]{\ensuremath{\underset{\substack{{#1}}}%
{\mathrm{Argmin}}\;\;#2 }}
\renewcommand{\leq}{\ensuremath{\leqslant}}
\renewcommand{\geq}{\ensuremath{\geqslant}}
\renewcommand{\le}{\ensuremath{\leqslant}}
\renewcommand{\ge}{\ensuremath{\geqslant}}
\newcommand{\ran}{\ensuremath{\operatorname{ran}}}
\newcommand {\newtext}[1]{\textcolor{black}{#1}}
\newcommand {\revision}[1]{\textcolor{black}{#1}}
\newcommand {\correc}[1]{\textcolor{black}{#1}}
\title{Convergence Rate Analysis of the Majorize-Minimize Subspace Algorithm -- Extended Version}
\author{Emilie Chouzenoux 
and Jean-Christophe Pesquet 
\thanks{E. Chouzenoux (corresponding author)  and J.-C. Pesquet are with the Laboratoire d'Informatique Gaspard 
Monge, UMR CNRS 8049,
Universit\'e Paris-Est, 77454 Marne la Vall\'ee 
Cedex 2, France. E-mail: \texttt{emilie.chouzenoux@univ-paris-est.fr}. This work was supported by the CNRS Imag'In project under grant 2015
OPTIMISME, and by the CNRS Mastodons project under grant 2016 TABASCO.} 
}
\begin{document}

\maketitle

\begin{abstract}
State-of-the-art methods for solving smooth optimization problems are nonlinear conjugate gradient, low memory BFGS,
and Majorize-Minimize (MM) subspace algorithms. The MM subspace algorithm which has been introduced more recently
has shown good practical performance when compared with other methods on various optimization problems arising in signal and image processing. 
However, to the best of our knowledge,  no general result exists concerning the theoretical convergence rate of the MM subspace algorithm.  This paper aims
at deriving such convergence rates both for batch and online versions of the algorithm and, in particular, discusses the influence of the choice of the subspace.
\end{abstract}

{\small \emph{Keywords}: convergence rate, optimization, subspace algorithms, memory gradient methods, descent methods, majorization-minimization, \correc{online optimization, learning}.}

\section{Introduction}

The Majorize-Minimize (MM) subspace algorithm \cite{Chouzenoux2011a} is based on the idea of constructing, at the current iteration, a quadratic majorizing approximation of the cost function of interest~\cite{Hunter04}, and generating the next iterate by minimizing this surrogate function within a subspace spanned by few directions \cite{Elad2007,Conn94,Yuan07}. Note that the MM subspace algorithm can be viewed as a special instance of nonlinear conjugate gradient (NLCG) \cite{Hager06} with closed form formula for the stepsize and conjugacy parameter, or as a particular low memory BFGS (L-BFGS) algorithm \cite{Liu89} with a specific combination of memory directions. The MM subspace algorithm enjoys nice convergence properties \cite{Chouzenoux2012}, and shows good performance in practice, when compared with NLCG, L-BFGS, and also with graph-cut based discrete optimization methods, and proximal algorithms \cite{Chouzenoux2011a,Chouzenoux2011icip,Florescu2014}. It has recently been extended to the online case when only a stochastic approximation of the criterion is employed at each iteration \cite{Chouzenoux15tsp}.
All these works illustrate the fact that the choice of the subspace has a major impact on the practical convergence speed of the algorithm (see, for instance \cite[Section 5]{Chouzenoux2011a}, \cite[Section 5.1]{Chouzenoux2012}). In particular, it seems that the best performance is obtained for the memory gradient subspace \cite{Miele69}, spanned by the current gradient and the previous direction, leading to the so-called MM Memory Gradient (3MG) algorithm. However, only an analysis concerning the convergence rates of half-quadratic algorithms (corresponding to the case when the subspace spans the whole Euclidean space) is available \cite{Allain_M_2006_ieee-tip_On_galc,Nikolova05}.\\
Section \ref{se:algo} describes the general form of the MM subspace algorithm and its main known properties.
In Section \ref{se:rate}, a convergence rate analysis is performed for both batch and online versions of the algorithm for 
\revision{minimizing a wide class of strongly convex cost functions}.

\section{MM subspace algorithm}\label{se:algo}

\subsection{Optimization problem}

In this paper, we will be interested in the minimization of the penalized quadratic cost function:
\revision{
\begin{equation}\label{e:defF}
F\colon  \eR^N \to \eR \colon \hb \mapsto  \frac12 \hb^\top \Rb  \hb - \rb^\top \hb  + \Psi(\hb)\newtext{,}
\end{equation}
where $\rb \in \eR^N$}, $\Rb\in \eR^{N\times N}$ is a symmetric positive definite matrix, and
$\Psi$ is a \correc{lower-bounded} twice-continuously differentiable convex function. In this paper, it will be assumed that $F$ is only accessible through a sequence $(F_n)_{n\ge 1}$ of approximations estimated in an online manner, such that, for every $n\in \eN^*$, 
\revision{
\begin{equation}\label{e:defFn}
 F_n\colon \eR^N \to \eR \colon \hb \mapsto \frac12 \hb^\top \Rb_n \hb - \rb_n^\top \hb  + \Psi(\hb),
\end{equation}
where the vector $\rb_n$ and the symmetric nonnegative definite matrix $\Rb_n$ are approximations of $\rb$ and $\Rb$.  
For simplicity, we will suppose that
\begin{assumption} \label{ass:conver}\
\correc{
\begin{enumerate}
\item  \label{ass:conver1}$(\| \rb_{n} - \rb_{n+1}\|)_{n \ge 1}$ and $(\| \Rb_{n} - \Rb_{n+1}\|)_{n \ge 1}$ are summable sequences,
\item  \label{ass:conver2} $(\rb_n)_{n\ge 1}$, and $(\Rb_n)_{n\ge 1}$ converge to $\rb$ and $\Rb$, respectively.
\end{enumerate} 
}
\end{assumption}
}
It is worth emphasizing that Assumption~\ref{ass:conver} encompasses the batch case when $F_n \equiv F$. Moreover, it should be pointed out that
all the results presented subsequently can be easily  extended to a stochastic framework where 
$\rb_n$ and $\Rb_n$ are consistent statistical estimates of 
$\rb$ and $\Rb$, and convergence arises almost surely.

\subsection{Majorant function}

At each iteration $n\in \eN^*$ of the MM subspace algorithm, the available estimate $F_n$ of $F$ is replaced by a surrogate function $\Theta_n(\cdot,\hb_n)$
based on the current point $\hb_n$ (computed at the previous iteration). \revision{This surrogate function \cite{Jacobson07,Zhang2007,Razaviyayn2016} must be
such that
\begin{equation}
(\forall \hb \in \eR^N) \quad F_n(\hb)-F_n(\hb_n) \le \Theta_n(\hb,\hb_n)-\Theta_n(\hb_n,\hb_n). \label{e:MMmaj}
\end{equation}}
We assume that $\Theta_n(\cdot,\hb_n)$ is a quadratic function of the form
\begin{align}
\label{e:defQc}
(\forall \hb \in \eR^N)\quad
\Theta_n(\hb,\hb_n) =\;
&F_n(\hb_n)+\nabla F_n(\hb_n)^\top (\hb-\hb_n) \nonumber\\
&+ \frac12 (\hb-\hb_n)^\top \Ab_n(\hb_n) (\hb-\hb_n),
\end{align}
where 
$\Ab_n(\hb_n) = \Rb_n + \Bb(\hb_n)$
and $\Bb(\hb_n)\in \eR^{N\times N}$ is some symmetric nonnegative definite matrix (see \cite{Figueiredo2007,Fessler98,RepettiSPL2015,Ning2014,Song15} for examples).
 
\subsection{MM subspace algorithm}

The MM subspace algorithm consists of defining the following sequence of 
vectors $(\hb_n)_{n\ge 1}$:
\begin{equation}\label{e:MMmini}
(\forall n \in \eN^*) \qquad \hb_{n+1} \in \Argmind{\hb \in \ran\Db_n}{ \Theta_n(\hb,\hb_n)},
\end{equation}
where $\hb_1$ is set to an initial value, and $\ran\Db_n$ is the \revision{range of} matrix $\Db_n\in \eR^{N\times M_n}$ with $M_n \ge 1$, constructed in such a way that the steepest descent direction $- \nabla F_n(\hb_n)$ belongs to $\ran\Db_n$. Several choices have been proposed in the literature for matrices $(\Db_n)_{n \in \eN^*}$. On the one hand, if, for every $n\in \eN^*$, $\rank(\Db_n)=N$, Algorithm \eqref{e:MMmini} becomes equivalent to a half-quadratic method with unit stepsize \cite{Allain_M_2006_ieee-tip_On_galc,Idier01,Charbonnier97}. Half-quadratic algorithms are known to be effective optimization methods, but the resolution of the minimization subproblem involved in \eqref{e:MMmini} requires the inversion of matrix $\Ab_n(\hb_n)$ 
which may have a high computational cost. On the other hand, if for every $n\in\eN^*$, $\Db_n$ reduces to $[-\nabla F_n(\hb_n),\hb_n]$, then \eqref{e:MMmini} reads:
for every $n\in \eN^*$
$\hb_{n+1} = u_{n,2} \hb_n-u_{n,1} \nabla F_n(\hb_n)$,
where $(u_{n,1},u_{n,2})\in \mathbb{R}^2$. In the special case when $u_{n,2}=1$, we recover the form of a gradient-like algorithm with step-size $u_{n,1}$ \cite{Labat08,Lange1995}. An intermediate size subspace matrix is obtained by choosing, for every $n>1$,
$\Db_n = 
[-\nabla F_n(\hb_n),\hb_n,\hb_n-\hb_{n-1}]$. 
This particular choice for the subspace yields the 3MG algorithm~\cite{Chouzenoux2012,Chouzenoux15tsp}.

\subsection{Convergence result}

The convergence of the MM subspace Algorithm \eqref{e:MMmini} has been studied in \cite{Chouzenoux2011a,Chouzenoux2012,Chouzenoux15tsp} under various assumptions. We now provide a convergence result which is \revision{a deterministic version of the one in \cite[Section IV]{Chouzenoux15tsp}}. This result requires
the following additional assumption:
\begin{assumption}\label{a:conv}\ 
\begin{enumerate}
\item \label{a:convii} For every $n\in \eN^*$, $\{\nabla F_n(\hb_n),\hb_n\} \subset \ran \Db_n$,
\item \label{a:conviii} There exists a positive definite matrix $\Vb$ such that, for every $n\in \eN^*$, \correc{$ \nabla^2 \Psi(\hb_n) \preceq \Bb(\hb_n) \preceq \Vb$}, where $\nabla^2 \Psi$ denotes the Hessian of $\Psi$, 
\footnote{$\preceq$ and $\prec$ denote
the weak and strict Loewner orders, respectively,}
\item \label{a:conviv} \correc{At least one of the following statements holds:
\begin{enumerate}
	\item \label{a:conviv1} $\rb_n \equiv \rb$ and $\Rb_n \equiv \Rb$,
	\item \label{a:conviv2} $\hb \mapsto  \Bb(\hb) \hb - \nabla \Psi(\hb) $ is a bounded function.
\end{enumerate}
}
\end{enumerate}
\end{assumption}

\correc{
\begin{remark}\label{rmk:lip}
Note that the convexity of $\Psi$ and Assumption \ref{a:conv}\ref{a:conviii} implies that $\Psi$ is Lipschitz differentiable on $\eR^N$, with Lipschitz constant $|||\Vb|||$. Conversely, if $\Psi$ is $\beta$-Lipschitz differentiable with $\beta \in ]0,+\infty[$, Assumption \ref{a:conv}\ref{a:conviii} is satisfied with $\Vb = \Bb(\hb_n) = \beta \Ib_N$~\cite{Bauschkle_2011_book_convex_o}. However, better choices for the curvature matrix are often possible \cite{RepettiSPL2015,Song15}. In particular, Assumption \ref{a:conv}\ref{a:conviv2}, required in the online case, is satisfied for a wide class of functions and majorants \cite{Chouzenoux2011a,Chouzenoux15tsp}.
\end{remark}
}

\begin{proposition}\label{p:convmain}
Assume that Assumptions \ref{ass:conver} and \ref{a:conv} are fulfilled. Then, the following hold:
\begin{enumerate}
\item \label{p:convmain0} $(\|\nabla F_n(\hb_n)\|)_{n\ge 1}$ is square-summable.
\item \label{p:convmainiii} $(\hb_n)_{n\ge 1}$ converges to the unique (global) minimizer $\widehat{\hb}$ of~$F$.
\end{enumerate}
\end{proposition}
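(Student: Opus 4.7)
My plan is to derive a one-step MM descent inequality with a uniform constant, use it together with Assumption~\ref{a:conv}\ref{a:conviv} to secure boundedness of the iterates, absorb the drift of $(\Rb_n,\rb_n)$ into a summable error, and then conclude via strong convexity of $F$.

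First, since $\{\hb_n,-\nabla F_n(\hb_n)\}\subset \ran \Db_n$ by Assumption~\ref{a:conv}\ref{a:convii}, the minimality of $\hb_{n+1}$ over $\ran\Db_n$ gives $\Theta_n(\hb_{n+1},\hb_n) \le \Theta_n(\hb_n - \alpha \nabla F_n(\hb_n), \hb_n)$ for every $\alpha\in\eR$. Minimizing this scalar quadratic at $\alpha = 1/L$, where $L$ is a uniform upper bound on $|||\Ab_n(\hb_n)||| = |||\Rb_n+\Bb(\hb_n)|||$, and combining with \eqref{e:MMmaj} yields
\[
F_n(\hb_{n+1}) - F_n(\hb_n) \le -\frac{1}{2L}\|\nabla F_n(\hb_n)\|^2.
\]
Such an $L$ exists because $(\Rb_n)_{n\ge 1}$ converges (hence is bounded) and $\Bb(\hb_n)\preceq\Vb$ by Assumption~\ref{a:conv}\ref{a:conviii}.

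The main obstacle is bounding $(\hb_n)_{n\ge 1}$. Under Assumption~\ref{a:conv}\ref{a:conviv1}, the inequality above gives $F(\hb_n)\le F(\hb_1)$, and coercivity of the strongly convex $F$ (since $\Rb\succ 0$) yields boundedness at once. Under Assumption~\ref{a:conv}\ref{a:conviv2}, the approximations $F_n$ need not be coercive, so I use instead the normal equation associated with \eqref{e:MMmini}, namely $\hb_{n+1} = \hb_n - \Hb_n \nabla F_n(\hb_n)$ with $\Hb_n = \Db_n(\Db_n^\top \Ab_n(\hb_n)\Db_n)^{-1}\Db_n^\top$. The inclusion $\hb_n\in\ran\Db_n$ forces the identity $\Hb_n\Ab_n(\hb_n)\hb_n = \hb_n$, so substituting $\nabla F_n(\hb_n) = \Ab_n(\hb_n)\hb_n - \rb_n - (\Bb(\hb_n)\hb_n - \nabla\Psi(\hb_n))$ produces
\[
\hb_{n+1} = \Hb_n\bigl(\rb_n + \Bb(\hb_n)\hb_n - \nabla\Psi(\hb_n)\bigr).
\]
Since $\|\Hb_n\|\le 1/\lambda_{\min}(\Ab_n(\hb_n))$ stays bounded (because $\Rb_n\to\Rb\succ 0$ and $\Bb(\hb_n)\succeq 0$), while $(\rb_n)_{n\ge 1}$ and $\hb\mapsto\Bb(\hb)\hb - \nabla\Psi(\hb)$ are bounded, so is $\|\hb_{n+1}\|$.

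Finally, the identity $F_{n+1}(\hb_{n+1}) - F_n(\hb_{n+1}) = \frac{1}{2}\hb_{n+1}^\top(\Rb_{n+1}-\Rb_n)\hb_{n+1} - (\rb_{n+1}-\rb_n)^\top\hb_{n+1}$ together with the boundedness of $(\hb_n)$ and Assumption~\ref{ass:conver}\ref{ass:conver1} produces a summable sequence $(\epsilon_n)_{n\ge 1}$ such that
\[
F_{n+1}(\hb_{n+1}) \le F_n(\hb_n) - \frac{1}{2L}\|\nabla F_n(\hb_n)\|^2 + \epsilon_n.
\]
Since $F_n(\hb_n) = F(\hb_n)+o(1)$ is bounded below on the bounded iterate set, telescoping the modified potential $F_n(\hb_n) + \sum_{k\ge n}\epsilon_k$ yields claim~\ref{p:convmain0}. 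For claim~\ref{p:convmainiii}, claim~\ref{p:convmain0} forces $\nabla F_n(\hb_n)\to 0$, and then $\nabla F(\hb_n) = \nabla F_n(\hb_n) + (\Rb-\Rb_n)\hb_n + (\rb_n-\rb)\to 0$ by Assumption~\ref{ass:conver}\ref{ass:conver2}; strong convexity of $F$ with modulus $\lambda_{\min}(\Rb)$ gives $\lambda_{\min}(\Rb)\|\hb_n-\widehat{\hb}\|\le\|\nabla F(\hb_n)\|$ and delivers $\hb_n\to\widehat{\hb}$.
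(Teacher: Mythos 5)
Your proof is correct, and while its overall skeleton (a uniform MM descent inequality, a summable perturbation term $\epsilon_n$ coming from $\Rb_{n+1}-\Rb_n$ and $\rb_{n+1}-\rb_n$, and a telescoped potential) matches the paper's, two of its building blocks are genuinely different. For boundedness in the online case, the paper lower-bounds the shifted surrogate $\widetilde{\Theta}_n(\cdot,\hb_n)$ by a coercive quadratic and compares the value at $\hb_{n+1}$ with the value at $\zerob\in\ran\Db_n$; you instead exploit the closed form $\hb_{n+1}=\hb_n-\Hb_n\nabla F_n(\hb_n)$, the identity $\Hb_n\Ab_n(\hb_n)\hb_n=\hb_n$ (valid precisely because $\hb_n\in\ran\Db_n$), and the bound $|||\Hb_n|||\le \lambda_{\min}(\Ab_n(\hb_n))^{-1}$ to get $\hb_{n+1}=\Hb_n\big(\rb_n+\Bb(\hb_n)\hb_n-\nabla\Psi(\hb_n)\big)$ directly; both arguments hinge on the same boundedness of $\cb_n(\hb_n)$ and on $\Ab_n(\hb_n)\succeq \Rb-\epsilon\Ib_N$ for large $n$, so they are equivalent in strength, yours being a bit more explicit. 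Your descent step with fixed stepsize $1/L$ plays the same role as the paper's optimal stepsize $\Phi_n\ge\alpha_\epsilon$. The more substantial difference is the last step: the paper proves $\hb_{n+1}-\hb_n\to\zerob$, invokes Ostrowski's theorem and a cluster-point argument to identify the limit, whereas you bypass all of this with the strong-monotonicity error bound $\lambda_{\min}(\Rb)\|\hb_n-\widehat{\hb}\|\le\|\nabla F(\hb_n)\|$, which is shorter and avoids any compactness/connectedness machinery; this is a clean simplification made possible by strong convexity of $F$. Two small points you should tighten: write $\Hb_n=\Db_n(\Db_n^\top\Ab_n(\hb_n)\Db_n)^{\dag}\Db_n^\top$ with the pseudo-inverse (as in the paper's Proposition 2), since $\Db_n$ need not have full column rank --- the identity $\Hb_n\Ab_n(\hb_n)\hb_n=\hb_n$ and the representation of $\hb_{n+1}$ still hold because the minimizer of the strictly convex $\Theta_n(\cdot,\hb_n)$ over $\ran\Db_n$ is unique once $\Ab_n(\hb_n)\succ\Ob_N$; and justify $|||\Hb_n|||\le\lambda_{\min}(\Ab_n(\hb_n))^{-1}$ by the factorization $\Hb_n=\big(\Ab_n(\hb_n)\big)^{-1/2}\Pb_n\big(\Ab_n(\hb_n)\big)^{-1/2}$ with $\Pb_n$ the orthogonal projector onto $\ran\big(\big(\Ab_n(\hb_n)\big)^{1/2}\Db_n\big)$. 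Neither point affects the validity of your argument.
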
 

\begin{proof}
See Appendix \ref{ap:convMM}.
\end{proof}

\section{Convergence rate analysis}\label{se:rate}

\subsection{Convergence rate results}

We will first give a technical lemma the proof of which is in the spirit of classical approximation techniques for the study of first-order optimization methods (see \cite[Section 1]{Polak_E_1997_book_Optimization_aca}):
\begin{lemma}\label{le:boundinfFvn}
Suppose that Assumptions \ref{ass:conver} and \ref{a:conv} hold. Let $\epsilon \in ]0,+\infty[$ be such that $\epsilon \Ib_N \prec \Rb $.
Then, there exists $n_\epsilon\in \eN^*$ such that, for every $n\ge n_\epsilon$, 
\revision{$\nabla^2 F_n(\hb_n) \succeq \Rb-\epsilon \Ib_N$}
and
\begin{equation}\label{e:boundinfFvn}
 F_n(\hb_n) - \inf F_n \le \frac12 (1+\epsilon)\big(\nabla F_n(\hb_n)\big)^\top\big(\nabla^2 F_n(\hb_n)\big)^{-1}\nabla F_n(\hb_n).
\end{equation}
\end{lemma}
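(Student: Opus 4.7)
I would handle the first assertion directly from the decomposition $\nabla^2 F_n(\hb_n) = \Rb_n + \nabla^2 \Psi(\hb_n)$. Convexity of $\Psi$ gives $\nabla^2 \Psi(\hb_n) \succeq 0$, and Assumption \ref{ass:conver}\ref{ass:conver2} supplies an index $n_\epsilon$ such that $\Rb_n \succeq \Rb - \epsilon \Ib_N$ for every $n \ge n_\epsilon$, so that $\nabla^2 F_n(\hb_n) \succeq \Rb - \epsilon \Ib_N \succ 0$. The same bound actually holds at every $\hb \in \eR^N$, so $F_n$ is $\mu$-strongly convex with $\mu := \lambda_{\min}(\Rb - \epsilon \Ib_N) > 0$; in particular it admits a unique minimizer $\widehat{\hb}_n$, and $\nabla^2 F_n(\hb_n)$ is invertible, so the right-hand side of \eqref{e:boundinfFvn} is well-defined.

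For the bound \eqref{e:boundinfFvn} the plan is to reduce the problem to the exactly-quadratic case by a Taylor-integral expansion around $\widehat{\hb}_n$. Setting $v_n := \hb_n - \widehat{\hb}_n$ and $H_n(s) := \nabla^2 F_n(\widehat{\hb}_n + s v_n)$ for $s \in [0,1]$, and using $\nabla F_n(\widehat{\hb}_n) = 0$, two standard integrations yield
\begin{equation*}
F_n(\hb_n) - \inf F_n = \int_0^1 (1-s) v_n^\top H_n(s) v_n \, ds, \qquad \nabla F_n(\hb_n) = M_n v_n, \quad M_n := \int_0^1 H_n(s)\, ds.
\end{equation*}
If $H_n(s)$ were constant in $s$, these identities would collapse into the exact Newton identity $F_n(\hb_n) - \inf F_n = \frac12 \nabla F_n(\hb_n)^\top H_n^{-1} \nabla F_n(\hb_n)$ with $H_n := \nabla^2 F_n(\hb_n)$, giving the lemma with $\epsilon = 0$. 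So the crux of the proof is to show that, for $n$ large, $H_n(s)$ is uniformly close to $H_n$ over $s \in [0,1]$.

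To get this uniform control I would first observe that both endpoints of the segment $[\widehat{\hb}_n,\hb_n]$ converge to $\widehat{\hb}$: for $\hb_n$ this is Proposition \ref{p:convmain}\ref{p:convmainiii}, whereas for $\widehat{\hb}_n$ I would combine $\mu \|\widehat{\hb}_n - \hb_n\| \le \|\nabla F_n(\hb_n)\|$ (a standard consequence of $\mu$-strong convexity of $F_n$) with $\|\nabla F_n(\hb_n)\| \to 0$, which follows from Proposition \ref{p:convmain}\ref{p:convmain0}. Continuity of $\nabla^2 \Psi$ at $\widehat{\hb}$ together with $\Rb_n \to \Rb$ then produces, for any $\delta > 0$, an index beyond which $(1-\delta) H_n \preceq H_n(s) \preceq (1+\delta) H_n$ uniformly in $s \in [0,1]$. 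Plugging the upper inequality into the first integral gives $F_n(\hb_n) - \inf F_n \le \frac{1+\delta}{2} v_n^\top H_n v_n$, while the lower one yields $M_n \succeq (1-\delta) H_n$; a short congruence computation with $A := H_n^{-1/2} M_n H_n^{-1/2} \succeq (1-\delta) \Ib_N$ then gives $v_n^\top H_n v_n = \nabla F_n(\hb_n)^\top M_n^{-1} H_n M_n^{-1} \nabla F_n(\hb_n) = \nabla F_n(\hb_n)^\top H_n^{-1/2} A^{-2} H_n^{-1/2} \nabla F_n(\hb_n) \le (1-\delta)^{-2} \nabla F_n(\hb_n)^\top H_n^{-1} \nabla F_n(\hb_n)$. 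Choosing $\delta$ small enough so that $(1+\delta)/(1-\delta)^2 \le 1 + \epsilon$ then closes the argument. The main obstacle is establishing the convergence $\widehat{\hb}_n \to \widehat{\hb}$ of the minimizers of the approximations, which essentially relies on both parts of Proposition \ref{p:convmain}; once that is in hand, the uniform Hessian estimate along $[\widehat{\hb}_n,\hb_n]$ and the two sandwich inequalities are routine.
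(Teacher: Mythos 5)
Your proof is correct and follows essentially the same route as the paper's: a second-order Taylor expansion with integral remainder around the minimizer $\widehat{\hb}_n$ of $F_n$, convergence of both $\hb_n$ and $\widehat{\hb}_n$ to $\widehat{\hb}$ via Proposition \ref{p:convmain}, and continuity of $\nabla^2\Psi$ to make the Hessian integrals asymptotically interchangeable with $\nabla^2 F_n(\hb_n)$. The only difference is bookkeeping: you quantify the error through a multiplicative $(1\pm\delta)$ Loewner sandwich and a congruence computation, whereas the paper passes to the limit of the matrices $\Hb_n^{(1)},\Hb_n^{(2)}$ and compares with $\tfrac12\epsilon\big(\nabla^2 F_n(\hb_n)\big)^{-1}$ directly.
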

\begin{proof} See Appendix \ref{ap:boundinfFvn}.
\end{proof}
\medskip
We now state our main result which basically allows us to quantify how fast the proposed iterative approach is able to decrease asymptotically
the cost function: 
\begin{proposition}\label{p:rateconvMM}
Suppose that Assumptions \ref{ass:conver} and \ref{a:conv} hold. Let $\epsilon \in ]0,+\infty[$ be such that $\epsilon \Ib_N \prec \Rb $. 
Then, there exists $n_\epsilon\in \eN^*$ such that, for every $n\ge n_\epsilon$, \revision{$\nabla^2 F_n(\hb_n) \succeq \Rb-\epsilon \Ib_N$} and
\begin{equation}\label{e:boundinfFvnfinal}
F_n(\hb_{n+1})-\inf F_n \le \theta_n
\big(F_n(\hb_n) - \inf F_n\big)
\end{equation}
where $\theta_n = 1-(1+\epsilon)^{-1}\widetilde{\theta}_n$, 
\begin{equation}
 \label{eq:thetantilde} \widetilde{\theta}_n = 
\frac{\big(\nabla F_n(\hb_n)\big)^\top \Cb_n(\hb_n) \nabla F_n(\hb_n)}
{\big(\nabla F_n(\hb_n)\big)^\top\big(\nabla^2F_n(\hb_n)\big)^{-1}\nabla F_n(\hb_n)},
\end{equation}
$\Cb_n(\hb_n) = \Db_n (\Db_n^\top \Ab_n(\hb_n) \Db_n)^\dag  \Db_n^\top$, and $(\cdot)^\dag$ denotes the pseudo-inverse operation. Furthermore, some lower and upper bounds on $\theta_n$ are given by
\begin{align}
&\underline{\theta}_n = 1-(1+\epsilon)^{-1} \underline{\kappa}_n^{-1} \revision{>0},
\label{e:lowertheta}\\
&\revision{\overline{\theta}_n = 1 - (1+\epsilon)^{-1} \overline{\kappa}_n^{-1}\left(1-\Big(\frac{\overline{\sigma}_n-\underline{\sigma}_n}{\overline{\sigma}_n+\underline{\sigma}_n}\Big)^2
\right) <1,}
\label{e:uppertheta}
\end{align}
where
\revision{$\underline{\kappa}_n \ge 1$ (resp. $\overline{\kappa}_n$) is the minimum (resp. maximum) eigenvalue of \linebreak
$\big(\Ab _n(\hb_n)\big)^{\frac12} \big(\nabla^2 F_n(\hb_n)\big)^{-1} \big(\Ab _n(\hb_n)\big)^{\frac12}$, and}
$\underline{\sigma}_n$ (resp. $\overline{\sigma}_n$) is the minimum (resp. maximum) eigenvalue of
\revision{$\nabla^2 F_n(\hb_n)$}.
\end{proposition}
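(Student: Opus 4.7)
The plan is to first derive the recursion \eqref{e:boundinfFvnfinal}, then bound $\widetilde{\theta}_n$ from above (yielding $\underline{\theta}_n$) and from below (yielding $\overline{\theta}_n$). For the recursion, I would combine the majorization inequality \eqref{e:MMmaj} with the explicit minimization step \eqref{e:MMmini}. Parametrizing $\hb \in \ran \Db_n$ as $\hb = \hb_n + \Db_n \sb$ (with $\sb \in \eR^{M_n}$) and using \eqref{e:defQc} reduces the update to an unconstrained quadratic minimization in $\sb$. The associated normal equations $\Db_n^\top \Ab_n(\hb_n) \Db_n \sb = -\Db_n^\top \nabla F_n(\hb_n)$ are consistent because $\ran(\Db_n^\top \Ab_n(\hb_n) \Db_n) = \ran \Db_n^\top$ as soon as $\Ab_n(\hb_n) \succ 0$; and $\Ab_n(\hb_n) \succeq \nabla^2 F_n(\hb_n) \succeq \Rb - \epsilon \Ib_N \succ 0$ for $n \geq n_\epsilon$ provided by Lemma \ref{le:boundinfFvn}. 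A standard pseudo-inverse computation then gives the minimum value $-\tfrac12 \nabla F_n(\hb_n)^\top \Cb_n(\hb_n) \nabla F_n(\hb_n)$. Plugging this into \eqref{e:MMmaj}, subtracting $\inf F_n$ from both sides, and invoking Lemma \ref{le:boundinfFvn} to control $F_n(\hb_n) - \inf F_n$ yields exactly \eqref{e:boundinfFvnfinal}.

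For the lower bound, I would show $\widetilde{\theta}_n \leq \underline{\kappa}_n^{-1}$. The key algebraic fact is that $\Ab_n(\hb_n)^{1/2} \Cb_n(\hb_n) \Ab_n(\hb_n)^{1/2}$ is the orthogonal projector onto $\ran(\Ab_n(\hb_n)^{1/2} \Db_n)$, hence $\Cb_n(\hb_n) \preceq \Ab_n(\hb_n)^{-1}$. The ratio defining $\widetilde{\theta}_n$ is thus dominated by a Rayleigh quotient of $(\nabla^2 F_n(\hb_n))^{1/2} \Ab_n(\hb_n)^{-1} (\nabla^2 F_n(\hb_n))^{1/2}$, whose eigenvalues are the reciprocals of those of $\Ab_n(\hb_n)^{1/2} (\nabla^2 F_n(\hb_n))^{-1} \Ab_n(\hb_n)^{1/2}$, giving the announced bound. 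Strict positivity $\underline{\theta}_n > 0$ then follows from $\underline{\kappa}_n \geq 1$, which is in turn a consequence of Assumption \ref{a:conv}\ref{a:conviii} (forcing $\Ab_n(\hb_n) \succeq \nabla^2 F_n(\hb_n)$), combined with $\underline{\kappa}_n^{-1} \leq 1 < 1+\epsilon$.

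The upper bound $\overline{\theta}_n$ is the most delicate step and reduces to a Kantorovich-type inequality; this is the main technical obstacle. I would use the variational identity $\tfrac12 \nabla F_n(\hb_n)^\top \Cb_n(\hb_n) \nabla F_n(\hb_n) = -\min_{\hb \in \ran \Db_n}\bigl(\tfrac12 \hb^\top \Ab_n(\hb_n) \hb - \nabla F_n(\hb_n)^\top \hb\bigr)$; Assumption \ref{a:conv}\ref{a:convii} guarantees $\nabla F_n(\hb_n) \in \ran \Db_n$, so restricting the minimization to the one-dimensional line $\{t\,\nabla F_n(\hb_n) : t\in\eR\}$ and optimizing in $t$ yields $\nabla F_n(\hb_n)^\top \Cb_n(\hb_n) \nabla F_n(\hb_n) \geq \|\nabla F_n(\hb_n)\|^4/\bigl(\nabla F_n(\hb_n)^\top \Ab_n(\hb_n) \nabla F_n(\hb_n)\bigr)$. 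Multiplying and dividing the resulting lower bound on $\widetilde{\theta}_n$ by $\nabla F_n(\hb_n)^\top \nabla^2 F_n(\hb_n) \nabla F_n(\hb_n)$ factors it as the product of the generalized Rayleigh quotient $\bigl(\nabla F_n(\hb_n)^\top \nabla^2 F_n(\hb_n) \nabla F_n(\hb_n)\bigr)/\bigl(\nabla F_n(\hb_n)^\top \Ab_n(\hb_n) \nabla F_n(\hb_n)\bigr) \geq \overline{\kappa}_n^{-1}$ (by the variational characterization of the smallest generalized eigenvalue of the pencil $(\nabla^2 F_n(\hb_n),\Ab_n(\hb_n))$) and of a Kantorovich ratio for the positive definite matrix $\nabla^2 F_n(\hb_n)$, which the classical Kantorovich inequality bounds below by $1-\bigl((\overline{\sigma}_n-\underline{\sigma}_n)/(\overline{\sigma}_n+\underline{\sigma}_n)\bigr)^2$. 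The strict inequality $\overline{\theta}_n < 1$ then follows because $\underline{\sigma}_n > 0$, which is ensured by the choice of $\epsilon$ and the lower bound on $\nabla^2 F_n(\hb_n)$ valid for $n \geq n_\epsilon$.
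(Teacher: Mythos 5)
Your proposal is correct and follows essentially the same route as the paper: derive the per-iteration decrease $\tfrac12\nabla F_n(\hb_n)^\top\Cb_n(\hb_n)\nabla F_n(\hb_n)$ from the subspace minimization, invoke Lemma \ref{le:boundinfFvn} to get \eqref{e:boundinfFvnfinal}, bound $\widetilde{\theta}_n$ above by $\underline{\kappa}_n^{-1}$ (your projector argument $\Ab_n^{1/2}\Cb_n\Ab_n^{1/2}\preceq\Ib_N$ is just an algebraic restatement of the paper's comparison with the unconstrained minimizer of $\Theta_n$), and below via the steepest-descent restriction combined with the generalized Rayleigh quotient and Kantorovich's inequality, exactly as in Appendix \ref{ap:rateconvMM}. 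The only (inessential) omission is the degenerate case $\nabla F_n(\hb_n)=\zerob$, which the paper treats separately since $\widetilde{\theta}_n$ is then undefined.
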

\begin{proof}
See Appendix \ref{ap:rateconvMM}.
\end{proof}

\subsection{Discussion on the choice of the subspace}

Let us make some comments about the above results. First, as enlightened by our proof, at iteration $n\ge n_\epsilon$, the upper value of $\theta_n$ (i.e. the slowest convergence) is obtained in the case of a gradient-like algorithm.
As expected, $\overline{\theta}_n$ has a larger value when the eigenvalues
of the Hessian of $F_n$ are dispersed. Note that, according to \eqref{e:boundHessian},
\begin{equation} \label{eq:bndspectra}
\frac{\overline{\sigma}_n-\underline{\sigma}_n}{\overline{\sigma}_n+\underline{\sigma}_n} 
\le \frac{\overline{\eta}-\underline{\eta}+2\epsilon}{\overline{\eta}+\underline{\eta}},
\end{equation}
where 
$\underline{\eta} > 0$ is the minimum eigenvalue of  $\Rb$ and $\overline{\eta}$ is the maximum eigenvalue of $\Rb  + \Vb$. 
\revision{Since $\big(\big(\Ab _n(\hb_n)\big)^{\frac12} \big(\nabla^2 F_n(\hb_n)\big)^{-1} \big(\Ab _n(\hb_n)\big)^{\frac12}\big)_{n\ge n_\epsilon}$ is bounded,
there exists $\overline{\kappa}_{\rm max} \in [1,+\infty[$ such that $(\forall n \ge n_\epsilon)$ $\overline{\kappa}_n \le \overline{\kappa}_{\rm max}$.}
\revision{All these show} that the decay rate is uniformly strictly lower than 1.

In contrast, when the search subspace is the full space, the lower value of $\theta_n$ (i.e. the fastest convergence)
is obtained. The expression $\underline{\theta}_n$ in \eqref{e:lowertheta}
 shows that the decay is then faster when the quadratic majorant constitutes a tight approximation of function $F_n$ at $\hb_n$. Ideally, if $\Ab_n(\hb_n)$ can be chosen equal to $\nabla^2 F_n(\hb_n)$ and $\Db_n$ is full rank, then $\theta_n = O(\epsilon)$.
Such a behavior similar to Newton's method behavior leads to the best performance one can reasonably expect from the available data at iteration $n$.

Finally, when a mid-size subspace is chosen (as in the 3MG algorithm), an intermediate decay rate is obtained. Provided that $\Db_n$ captures the main eigendirections in $\Ab_n(\hb_n)$, a behavior close to the one previously mentioned can be expected in practice with the potential advantage of a reduced computational complexity per iteration. 

\subsection{Batch case}
The case when $F \equiv F_n$ is of main interest since it is addressed in most of the existing works. Then, Proposition~\ref{p:rateconvMM} and \eqref{eq:bndspectra}
lead to 
\begin{equation}\label{e:boundinfFvfinal}
(\forall n \ge n_\epsilon)\quad 
F(\hb_{n})- \inf F \le \mu \vartheta^n,
\end{equation}
where $\mu =\big(F(\hb_{n_\epsilon}) - \inf F\big)/ \vartheta^{n_\epsilon} $
and the worst-case geometrical decay rate $\vartheta \in ]0,1[$ is given by
\revision{
\begin{equation}
\vartheta = 1- \frac{1}{(1 + \epsilon)\overline{\kappa}_{\rm max}} 
\left(1-\Big(\frac{\overline{\eta}-\underline{\eta}+2\epsilon}{\overline{\eta}+\underline{\eta}}\Big)^2\right).
\end{equation}}
Since $F$ is an $\underline{\eta}$-strongly convex function, the following inequality is satisfied \cite[Definition 10.5]{Bauschkle_2011_book_convex_o}, for every $\alpha \in ]0,1[$, 
\begin{equation}
F\big(\alpha \hb_n+ (1-\alpha) \widehat{\hb}\big)+\frac12 \alpha(1-\alpha) \underline{\eta} \|\hb_n -\widehat{\hb}\|^2  
\le \alpha F(\hb_n)+(1-\alpha) F(\widehat{\hb}),
\end{equation}
\correc{
or, equivalently,
\begin{equation}
\frac12 \alpha(1-\alpha) \underline{\eta} \|\hb_n -\widehat{\hb}\|^2
\le \alpha \big(F(\hb_n)-F(\widehat{\hb})\big)+F(\widehat{\hb})-F\big(\alpha \hb_n+ (1-\alpha) \widehat{\hb}\big).
\end{equation}
Thus,
\begin{equation}
\frac12 (1-\alpha) \underline{\eta} \|\hb_n -\widehat{\hb}\|^2 \le F(\hb_n)-F(\widehat{\hb}).
\end{equation}
Letting $\alpha$ tend to $0$ in the latter inequality implies that}
\begin{equation}
\frac12 \underline{\eta} \|\hb_n -\widehat{\hb}\|^2 \le F(\hb_n)- F(\widehat{\hb}) \le \mu \vartheta^n.
\end{equation}
This shows that the MM subspace algorithm converges linearly with rate $\sqrt{\vartheta}$.

\section{Conclusion}
In this paper, we have established expressions of  the convergence rate of an online
version of the MM subspace algorithm.
These results help in better understanding the good numerical behaviour of this algorithm in signal/image processing applications
and the role played by the subspace choice. Even in the batch case, the provided
linear convergence result appears to be new. In future work, it could be interesting to investigate extensions of these properties to more
general cost functions than \eqref{e:defF}.

\appendix


\section{Proof of Proposition \ref{p:convmain}} \label{ap:convMM}

\subsection{Boundedness of $(\hb_n)_{n \ge 1}$ (online case)}
Assume that Assumption \ref{a:conv}\ref{a:conviv2} holds. For every $n\in \eN^*$, minimizing $\Theta_n(\cdot,\hb_n)$ is equivalent to minimizing the function
\begin{equation}\label{e:deftildeTheta}
(\forall \hb \in \eR^N) \quad 
\widetilde{\Theta}_n(\hb,\hb_n) = \frac12 \hb^\top \Ab_n(\hb_n) \hb-\cb_n(\hb_n)^\top \hb,
\end{equation}
with
\begin{align}
\cb_n(\hb_n) & = \Ab_n(\hb_n)  \hb_n - \nabla F_n(\hb_n) \nonumber \\
& = \rb_n + \Bb(\hb_n) \hb_n -\nabla \Psi(\hb_n)
\end{align}
According to Assumption \ref{a:conv}\ref{a:conviv2}, these exists $\eta \in ]0, +\infty[$ such that
\begin{equation}\label{e:boundc}
(\forall n \ge 1)\qquad \|\cb_n(\hb_n)\| \le \eta,
\end{equation}
In addition, because of Assumption \ref{ass:conver}\ref{ass:conver2}, there exists $\epsilon \in ]0,+\infty[$ and $n_0 \in \eN^*$ such that 
\begin{equation}\label{e:boundA}
(\forall n \ge n_0)\qquad
\Ab_n(\hb_n)  \succeq \Rb-\epsilon \Ib_N \succ \Ob_N,
\end{equation}
Using now the Cauchy-Schwarz inequality, we have
\begin{equation}
(\forall n \ge n_0)
(\forall \hb \in \eR^N) \quad 
\frac12 \hb^\top(\Rb-\epsilon \Ib_N )\hb -\|\hb\| \eta  \le \widetilde{\Theta}_n(\hb,\hb_n).
\end{equation}
Since $\Rb-\epsilon \Ib_N $ is a positive definite matrix, the lower bound corresponds to a coercive function with respect to $\hb$. There thus exists
$\zeta \in ]0,+\infty[$ such that, for every $\hb \in \eR^N$,
\begin{equation}
\|\hb\| > \zeta\quad \Rightarrow \qquad (\forall n \ge n_0)\;\; \widetilde{\Theta}_n(\hb,\hb_n) > 0.
\end{equation}
On the other hand, since $\zerob \in \ran \Db_n $, we have
\begin{equation}
\widetilde{\Theta}_n(\hb_{n+1},\hb_n) \le \widetilde{\Theta}_n(\zerob,\hb_n) = 0.
\end{equation}
The last two inequalities allow us to conclude that
\begin{equation} \label{eq:hbound}
(\forall n \ge n_0)\qquad 
\|\hb_{n+1}\| \le \zeta.
\end{equation}

\subsection{Convergence of $(F_n(\hb_n))_{n \ge 1}$} 
According to Assumption \ref{a:conv}\ref{a:convii}, the proposed algorithm is actually equivalent to
\begin{align}
(\forall n \in \eN^*) \qquad &\hb_{n+1} = \hb_n + \Db_n \widetilde{\ub}_n \label{e:uphvnuvt}\\
& \widetilde{\ub}_n = \argmin_{\widetilde{\ub} \in \eR^{M_n}} \Theta_n(\hb_n + \Db_n \widetilde{\ub},\hb_{n}).
\end{align}
By using \eqref{e:defQc} and cancelling the derivative of the function $\widetilde{\ub} \mapsto \Theta_n(\hb_n + \Db_n \widetilde{\ub},\hb_{n})$,
\begin{equation}\label{e:opttildeuvn}
\Db_n^\top \nabla F_n(\hb_n)  + \Db_n^\top \Ab_n(\hb_n) \Db_n \widetilde{\ub}_n = \zerob.
\end{equation}
Hence,
\begin{align}\label{e:majFnhvnuvt}
 \Theta(\hb_{n+1},\hb_n) & = F_n(\hb_{n})-\frac12 \widetilde{\ub}_n^\top \Db_n^\top \Ab_n(\hb_n) \Db_n \widetilde{\ub}_n\nonumber\\
& = F_n(\hb_{n})-\frac12 (\hb_{n+1}-\hb_n)^\top \Ab_n(\hb_n)  (\hb_{n+1}-\hb_n).
\end{align}
In view of \eqref{e:MMmaj} and \eqref{e:defQc}, this yields
\begin{equation}\label{e:majFnhvnp1hvn}
(\forall n \in \eN^*)\quad
F_n(\hb_{n+1})+ \frac12 (\hb_{n+1}-\hb_n)^\top \Ab_n(\hb_n)  (\hb_{n+1}-\hb_n) \le F_n(\hb_{n}).
\end{equation}
In addition, the following recursive relation holds
\begin{equation}
(\forall \hb \in \eR^N)\quad
 F_{n+1}(\hb)= F_n(\hb) - (\rb_{n+1}-\rb_n)^\top \hb+ \frac12 \hb^\top (\Rb_{n+1}-\Rb_n) \hb.
\end{equation}
It can thus be deduced that
\begin{equation}\label{e:recurFnrand}
F_{n+1}(\hb_{n+1})  + \frac12 (\hb_{n+1}-\hb_n)^\top 
\Ab_n(\hb_n) (\hb_{n+1}-\hb_n) 
 \le \;F_n(\hb_{n})+ \chi_n
\end{equation}
where
\begin{equation}
\chi_n = - (\rb_n-\rb_{n+1})^\top \hb_{n+1} + \frac12 \hb_{n+1}^\top (\Rb_n-\Rb_{n+1})\hb_{n+1}.
\end{equation}
We have
\begin{equation}
|\chi_n| \le \| \rb_n-\rb_{n+1}\| \, \|\hb_{n+1} \|+ \frac12 |||\Rb_n-\Rb_{n+1}||| \, \|\hb_{n+1}\|^2.
\end{equation}
If Assumption \ref{a:conv}\ref{a:conviv2} holds, then, according to \eqref{eq:hbound}, $(\hb_n)_{n\ge 1}$ is bounded, so that Assumption \ref{ass:conver}\ref{ass:conver1} guarantees that
\begin{equation}\label{e:sumchin}
\sum_{n=1}^{+\infty} |\chi_n| <+\infty.
\end{equation}
Otherwise, if Assumption \ref{a:conv}\ref{a:conviv1} holds, then $\chi_n \equiv 0$ and \eqref{e:sumchin} is obviously fulfilled. 
The lower-boundedness property of $\Psi$ entails that, for every $n\in \eN^*$, $F_n$ is lower bounded by
$\inf \Psi > -\infty$. Furthermore, \eqref{e:recurFnrand} leads to
\begin{equation}
 F_{n+1}(\hb_{n+1})-\inf \Psi  + \frac12 (\hb_{n+1}-\hb_n)^\top \Ab_n(\hb_n) (\hb_{n+1}-\hb_n) 
 \le \;F_n(\hb_{n})-\inf \Psi+ |\chi_n|.
\end{equation}
Since, for every $n\in \eN^*$, $F_n(\hb_{n})-\inf \Psi$ and $(\hb_{n+1}-\hb_n)^\top 
\Ab_n(\hb_n) (\hb_{n+1}-\hb_n)$ are nonnegative,  $ \left((\hb_{n+1}-\hb_n)^\top \Ab_n(\hb_n) (\hb_{n+1}-\hb_n)  \right)_{n\ge 1}$ is a summable sequence, and $(F_n(\hb_{n}) )_{n\ge 1}$ is convergent.

\subsection{Convergence of $(\nabla \mathrm{F}_n(\hb_n))_{n \ge 1}$} 
 According to \eqref{e:defQc}, we have, for every $\phi \in \eR$ and $n\in \eN^*$,
\begin{equation}
\Theta_n\big(\hb_n-\phi \nabla F_n(\hb_n),\hb_n\big) = F_n(\hb_n) - \phi \|\nabla F_n(\hb_n)\|^2 
+\frac{\phi^2}{2} \big(\nabla F_n(\hb_n)\big)^\top \Ab_n(\hb_n) \nabla F_n(\hb_n).
\end{equation}
Let
\begin{equation}
 \Phi_n \in \Argmind{\phi \in \eR}{\Theta_n\big(\hb_n-\phi \nabla F_n(\hb_n),\hb_n\big)}. \label{eq:defPhin}
\end{equation}
The following optimality condition holds:
\begin{equation}\label{e:optPhin}
  \big(\nabla F_n(\hb_n)\big)^\top \Ab_n(\hb_n) \nabla F_n(\hb_n)\, \Phi_n = \|\nabla F_n(\hb_n)\|^2.
\end{equation}
As a consequence of Assumption~\ref{a:conv}\ref{a:convii}, $(\forall \phi\in \eR)$ $\hb_n-\phi \nabla F_n(\hb_n) \in \ran \Db_n$.
It then follows from \eqref{e:MMmini} and \eqref{e:optPhin} that 
\begin{equation}\label{e:difFngradFnPhin}
\correc{\Theta_n\big(\hb_{n+1},\hb_n\big) \le \Theta_n\big(\hb_n-\Phi_n \nabla F_n(\hb_n),\hb_n\big) = F_n(\hb_n) -\frac{\Phi_n}{2} \|\nabla F_n(\hb_n)\|^2,}
\end{equation}
which, by using \eqref{e:majFnhvnuvt}, leads to
\begin{equation}\label{e:majgradsq}
\Phi_n \|\nabla F_n(\hb_n)\|^2 \le (\hb_{n+1}-\hb_n)^\top 
\Ab_n(\hb_n) (\hb_{n+1}-\hb_n).
\end{equation}
Let $\epsilon > 0$. Assumption \ref{a:conv}\ref{a:conviii} yields, for every $n\in \eN^*$,
\begin{equation}
\correc{\Ab_n(\hb_n) \preceq  (||| \Rb_n |||+ ||| \Vb ||| )  \Ib_N.}
\end{equation}
Therefore, according to Assumption \ref{ass:conver}\ref{ass:conver2},
\begin{equation}\label{e:boundAA}
(\exists n_0 \in \eN^*)
(\forall n \ge n_0)\quad
\Ob_N\prec \Ab_n(\hb_n)  \preceq  \alpha_\epsilon^{-1} \Ib_N
\end{equation}
where
\begin{equation}\label{e:defalpha}
\correc{\alpha_\epsilon= (|||\Rb |||+ |||\Vb||| +\epsilon)^{-1} > 0.}
\end{equation}
By using now \eqref{e:optPhin}, it can be deduced from \eqref{e:boundAA} that, if $n\ge n_0$
and $\nabla F_n(\hb_n)  \neq \zerob$, then $ \Phi_n  \ge \alpha_\epsilon$.
Then, it follows from \eqref{e:majgradsq} that
\begin{equation}
 \alpha_\epsilon \sum_{n=n_0}^{+\infty}  \|\nabla F_n(\hb_n) \|^2 
 \leq 
 \sum_{n=n_0}^{+\infty} \big(\hb_{n+1} -\hb_n \big)^\top 
\Ab_n(\hb_n) \big(\hb_{n+1} -\hb_n \big).
\end{equation}
By invoking the summability property of $ \left((\hb_{n+1}-\hb_n)^\top \Ab_n(\hb_n) (\hb_{n+1}-\hb_n)  \right)_{n\ge 1}$, we can conclude that $(\|\nabla F_n(\hb_n)\|^2)_{n\ge 1}$ is itself summable.

\subsection{Convergence of $(\hb_{n})_{n \ge 1}$} 
We have shown that $\big((\hb_{n+1}-\hb_n)^\top \Av_n(\hb_n) (\hb_{n+1}-\hb_n)\big)_{n\ge 1}$
converges to 0. In addition, we have seen that \eqref{e:boundA}
holds for a given $\epsilon \in ]0,+\infty[$ and $n_0 \in \eN^*$. This implies
that, for every $n \ge n_0$,
\begin{equation}
|||\Rb-\epsilon \Ib_N |||\, \|\hb_{n+1} -\hb_n \|^2 
 \le \big(\hb_{n+1} -\hb_n \big)^\top \Ab_n(\hb_n) 
\big(\hb_{n+1} -\hb_n \big)
\end{equation}
where $|||\Rb-\epsilon \Ib_N ||| > 0$. Consequently, $(\hb_{n+1}-\hb_n)_{n\ge 1}$ converges to $\zerob$.
In addition, $(\hb_n)_{n\ge 1}$ belongs to a compact set.
Thus, invoking Ostrowski's theorem \cite[Theorem 26.1]{Ostrowki_A_M_1973_book_Solution_eebs} implies that the set of cluster points of $(\hb_n)_{n\ge 1}$ is a nonempty compact connected set. 
By using \eqref{e:defF}-\eqref{e:defFn}, we have
\begin{equation}
(\forall n\in \eN^*)\quad
\nabla F_n(\hb_n)-\nabla F(\hb_n) = (\Rb_n-\Rb)\hb_n - \rb_n+\rb.
\end{equation}
Since $(\hb_n)_{n\ge 1}$ is bounded, it follows from that $\big(\nabla F_n(\hb_n)-\nabla F(\hb_n)\big)_{n\ge 1}$ converges to $\zerob$.
Since $\big(\nabla F_n(\hb_n)\big)_{n\ge 1}$ converges to $\zerob$, this implies that $\big(\nabla F(\hb_n)\big)_{n\ge 1}$ also converges  to $\zerob$. Let $\widehat{\hb}$ be a cluster point of $\big(\hb_n \big)_{n\ge 1}$. There exists a subsequence $\big(\hb_{k_n} \big)_{n\ge 1}$ such that $\hb_{k_n} \to \widehat{\hb}$.
As $F$ is continuously differentiable, we have
\begin{equation} 
\nabla F(\widehat{\hb}) = \lim_{n\to +\infty} \nabla F\big(\hb_{k_n} \big) = \zerob.
\end{equation}
This means that $\widehat{\hb}$ is a critical point of $F$. Since $F$ is a strongly convex function, it possesses a unique critical point $\widehat{\hb}$, which is the global minimizer of $F$~\cite[Prop.11.7]{Bauschkle_2011_book_convex_o}. Since the unique cluster point of $(\hb_n)_{n\ge 1}$ is $\widehat{\hb}$, this shows that $\hb_n \to \widehat{\hb}$.

\section{Proof of Lemma \ref{le:boundinfFvn}} \label{ap:boundinfFvn}
Because $\Rb$ is positive definite, according to Assumption \ref{ass:conver}\ref{ass:conver2}, there
exists $n_0\in \eN^*$ such that, for every $n\ge n_0$, 
\begin{equation}\label{e:bigboundAn}
\boldsymbol{O}_N \prec \Rb -\epsilon \Ib_N  \preceq \Rb_n  \preceq \Rb +\epsilon \Ib_N .
\end{equation}
Let $n\ge n_0$. Then, $F_n$ is a strongly convex continuous function. From standard results, this function possesses a unique global minimizer $\widehat{\hb}_n$. According to Assumption \ref{a:conv}\ref{a:conviii}, and \eqref{e:bigboundAn}, $\nabla^2 F_n$ is such that
\begin{align}\label{e:boundHessian}
(\forall \hb \in \eR^N)\quad \Ob_N &\prec \Rb-\epsilon \Ib_N  \nonumber\\
&\preceq \Rb_n +\nabla^{(2)} \Psi(\hb)  = \nabla^2 F_n(\hb)\nonumber\\
& \preceq \Rb +\epsilon \Ib_N   +  \Vb.
\end{align}
By using now the second-order Taylor formula with integral remainder, we get
\begin{equation}\label{e:Taylor2F}
F_n(\widehat{\hb}_n) = F_n(\hb_n)+\big(\nabla F_n(\hb_n)\big)^\top \big(\widehat{\hb}_n-\hb_n\big) 
+ \frac12 \big(\widehat{\hb}_n-\hb_n\big)^\top \Hb_n^{(2)}(\hb_n) \big(\widehat{\hb}_n-\hb_n\big),
\end{equation}
where
\begin{align}\label{e:Taylor1F}
\nabla F_n(\hb_n)&=\nabla F_n(\widehat{\hb}_n)+\Hb_n^{(1)}(\hb_n) (\hb_n-\widehat{\hb}_n)\nonumber\\
&= \Hb_n^{(1)}(\hb_n) (\hb_n-\widehat{\hb}_n)
\end{align}
and, for every $\hb \in \eR^N$,
\begin{align}
\Hb_n^{(1)}(\hb) &=
\int_0^1 \nabla^2 F_n\big(\widehat{\hb}_n+t(\hb-\widehat{\hb}_n)\big) dt\nonumber\\ 
&= \Rb_n +\int_0^1 \nabla^2 \Psi\big(\widehat{\hb}_n+t(\hb-\widehat{\hb}_n)\big) dt\\
\Hb_n^{(2)}(\hb) &=
2 \int_0^1 (1-t)\nabla^2 F_n\big(\widehat{\hb}_n+t(\hb-\widehat{\hb}_n)\big) dt\nonumber\\
&= \Rb_n +2\int_0^1 (1-t)\nabla^2 \Psi\big(\widehat{\hb}_n+t(\hb-\widehat{\hb}_n)\big) dt.
\end{align}
Because of the lower bound in \eqref{e:boundHessian}, 
\begin{equation}\label{e:lowboundHn1}
(\forall \hb \in \eR^N)\quad \Ob_N \prec \Rb-\epsilon \Ib_N  \preceq \Hb_n^{(1)}(\hb)
\end{equation}
and $\Hb_n^{(1)}(\hb)$ is thus invertible. Therefore,
combining \eqref{e:Taylor2F} and \eqref{e:Taylor1F} yields
\begin{multline}\label{e:FvnFvnhvn}
F_n(\widehat{\hb}_n) = F_n(\hb_n)
- \big(\nabla F_n(\hb_n)\big)^\top \big(\Hb_n^{(1)}(\hb_n)\big)^{-1} \nabla F_n(\hb_n) \\
 + \frac12 \big(\nabla F_n(\hb_n)\big)^\top \big(\Hb_n^{(1)}(\hb_n)\big)^{-1} \Hb_n^{(2)}(\hb_n) \big(\Hb_n^{(1)}(\hb_n)\big)^{-1} \nabla F_n(\hb_n).
\end{multline}
According to Assumption~\ref{a:conv}\ref{a:conviii}, for every $t\in [0,1]$, 
\begin{equation}
||| \nabla^2 \Psi\big(\widehat{\hb}_n+t(\hb_n-\widehat{\hb}_n)\big)||| \le  |||\Vb |||,
\end{equation} 
\revision{where  $|||\cdot|||$ denotes the matrix spectral norm.}
As Proposition~\ref{p:convmain}\ref{p:convmainiii} guarantees that $(\hb_n)_{n\ge 1}$ converges to the unique minimizer
$\widehat{\hb}$ of $F$, it follows from Proposition \ref{p:convmain}\ref{p:convmain0}, \eqref{e:Taylor1F}, and \eqref{e:lowboundHn1} that $(\widehat{\hb}_n)_{n\ge 1}$ also converges to $\widehat{\hb}$.
By using the continuity of $\nabla^2 \Psi$, $\big(\nabla^2 \Psi\big(\widehat{\hb}_n+t(\hb_n-\widehat{\hb}_n)\big)\big)_{n\ge 1}$ converges to
$\nabla^2 \Psi(\widehat{\hb})$ and, by invoking the dominated convergence theorem, it can be deduced that
\begin{equation}
\int_0^1 \nabla^2 \Psi\big(\widehat{\hb}_n+t(\hb_n-\widehat{\hb}_n)\big) dt \to \nabla^2 \Psi(\widehat{\hb}) .
\end{equation}
Since $(\Rb_n)_{n\ge 1}$ converges to $\Rb$, this allows us to conclude that $\big(\Hb_n^{(1)}(\hb_n)\big)_{n\ge 1}$
converges to $\nabla^2 F(\widehat{\hb})$. Proceeding similarly, it can be proved
that $\big(\Hb_n^{(2)}(\hb_n)\big)_{n\ge 1}$ also converges to $\nabla^2 F(\widehat{\hb})$.
This entails that
\begin{equation}
 \big(\Hb_n^{(1)}(\hb_n)\big)^{-1}
- \frac12 \big(\Hb_n^{(1)}(\hb_n)\big)^{-1} \Hb_n^{(2)}(\hb_n) \big(\Hb_n^{(1)}(\hb_n)\big)^{-1} 
 \to \frac12 \big(\nabla^2 F(\widehat{\hb})\big)^{-1}.
\end{equation}
Besides,  since $\big(\nabla^2 F_n(\hb_n)\big)_{n\ge 1} = \big(\Rb_n + \nabla^2 \Psi(\hb_n)\big)_{n\ge 1}$
converges to $\nabla^2 F(\widehat{\hb})$, there exists $n_\epsilon \ge n_0$ such that, for every $n\ge n_\epsilon$,
\begin{align}
\big(\Hb_n^{(1)}(\hb_n)\big)^{-1}- \frac12 \big(\Hb_n^{(1)}(\hb_n)\big)^{-1} \Hb_n^{(2)}(\hb_n) \big(\Hb_n^{(1)}(\hb_n)\big)^{-1} 
& -\frac12 \big(\nabla^2 F_n(\hb_n)\big)^{-1}\\
&\preceq \frac12 \epsilon (\Rb+\epsilon \Ib_N+ \Vb)^{-1}\nonumber\\
&\preceq \frac12 \epsilon\big(\nabla^2 F_n(\hb_n)\big)^{-1},
\end{align}
where the last inequality follows from \eqref{e:boundHessian}.
This implies that
\begin{equation}
 \big(\Hb_n^{(1)}(\hb_n)\big)^{-1}
- \frac12 \big(\Hb_n^{(1)}(\hb_n)\big)^{-1} \Hb_n^{(2)}(\hb_n) \big(\Hb_n^{(1)}(\hb_n)\big)^{-1} 
 \preceq \frac12 (1+\epsilon)  \big(\nabla^2 F_n(\hb_n)\big)^{-1}.
\end{equation}
By coming back to \eqref{e:FvnFvnhvn}, we deduce that, for every $n\ge n_\epsilon$,
\eqref{e:boundinfFvn} holds.

\section{Proof of Proposition \ref{p:rateconvMM}} \label{ap:rateconvMM}

Let $n \in \eN^*$. If $\nabla F_n(\hb_n)$ is zero, then $\hb_n$ is a global minimizer of $F_n$ and, \correc{according to \eqref{e:MMmaj}-\eqref{e:MMmini}, $F(\hb_{n+1}) \le \Theta_n(\hb_{n+1},\hb_n) - \Theta_n(\hb_{n},\hb_n) + F(\hb_n) \le F(\hb_n)$ so that $\hb_{n+1}$ is also a global minimizer of $F_n$, and \eqref{e:boundinfFvnfinal} is obviously satisfied}. So, without loss of generality, it will be assumed
in the rest of the proof that $\nabla F_n(\hb_n)$ is nonzero.
Because of Assumption \ref{a:conv}\ref{a:conviii} and \eqref{e:bigboundAn}, there
exists $n_0\in \eN^*$ such that,
for every 
$n\ge n_0$, 
\begin{equation}\label{e:bigboundAn2}
\boldsymbol{O}_N \prec \Rb -\epsilon \Ib_N \preceq \Rb_n  \preceq \Ab_n(\hb_n).
\end{equation}
\correc{
Using \eqref{e:majFnhvnp1hvn} and the definition of $\Cb_n(\hb_n)$, 
\begin{align}\label{e:devFvnFvn1}
F_n(\hb_{n+1}) \le & \;F_n(\hb_{n})-\frac12 (\hb_{n+1}-\hb_n)^\top \Ab_n(\hb_n)  (\hb_{n+1}-\hb_n)\nonumber\\
 = &\;F_n(\hb_{n})-\revision{\frac12 \left(\nabla F_n(\hb_n)\right)^\top} \Cb_n(\hb_n) \nabla F_n(\hb_n).
\end{align}
Combining \eqref{e:bigboundAn2}, \eqref{e:devFvnFvn1} and \eqref{e:difFngradFnPhin} yields}
\begin{equation}\label{e:lowerboundrate}
\frac{\|\nabla F_n(\hb_n)\|^4}{\big(\nabla F_n(\hb_n)\big)^\top \Ab_n(\hb_n) \nabla F_n(\hb_n)} 
\le \big(\nabla F_n(\hb_n)\big)^\top \Cb_n(\hb_n) \nabla F_n(\hb_n).
\end{equation}
In turn, we have
\begin{equation}\label{e:thetanminhvtilde}
 \Theta_n\big(\widetilde{\hb}_{n},\hb_n\big) \le \Theta_n\big(\hb_{n+1},\hb_n\big),
\end{equation}
where $\widetilde{\hb}_{n}$ is a global minimizer of $\Theta_n(\cdot,\hb_n\big)$. If 
$n \ge n_0$, then \eqref{e:bigboundAn2} shows that $\Ab_n(\hb_n)$ is invertible, and
\begin{equation}
\widetilde{\hb}_n = \hb_n- \big(\Ab_n(\hb_n)\big)^{-1} \nabla F_n(\hb_n)
\end{equation}
which, by using \eqref{e:devFvnFvn1} and \eqref{e:thetanminhvtilde}, yields
\begin{equation}\label{e:upperboundrate}
\big(\nabla F_n(\hb_n)\big)^\top \Cb_n(\hb_n) \nabla F_n(\hb_n) 
\le \big(\nabla F_n(\hb_n)\big)^\top \big(\Ab_n(\hb_n)\big)^{-1} \nabla F_n(\hb_n).
\end{equation}
It can be noticed that the lower bound in \eqref{e:lowerboundrate} is obtained when $\Db_n = \nabla F_n(\hb_n)$,
while the upper bound in \eqref{e:upperboundrate} is attained when $M_n = N$ and $\Db_n$ is full rank.

Let us now apply Lemma \ref{le:boundinfFvn}. According to this lemma, there exists $n_\epsilon\ge n_0$ such that, for every $n\ge n_\epsilon$,
\eqref{e:boundinfFvn} holds with $\nabla^2F_n(\hb_n) \succ \Ob_N$.
Let us assume that $n\ge n_\epsilon$.
By combining 
\eqref{e:boundinfFvn} and  \eqref{e:devFvnFvn1},  
we obtain
\begin{align}\label{e:boundinfFvnfinalpre}
&F_n(\hb_{n})-F_n(\hb_{n+1})
 \ge \frac{\widetilde{\theta}_n}{1+\epsilon}
\big(F_n(\hb_n) - \inf F_n\big)\nonumber\\
\Leftrightarrow \;\;&F_n(\hb_{n+1})-\inf F_n \le \Big(1-\frac{\widetilde{\theta}_n}{1+\epsilon}\Big)
\big(F_n(\hb_n) - \inf F_n\big),
\end{align}
which itself is equivalent to \eqref{e:boundinfFvnfinal}. The following lower bound is then be deduced from \eqref{e:lowerboundrate}:
\begin{equation}
\widetilde{\theta}_n \ge 
\frac{\|\nabla F_n(\hb_n)\|^4}{\beta_n \big(\nabla F_n(\hb_n)\big)^\top \Ab_n(\hb_n) \nabla F_n(\hb_n)},
\end{equation}
by setting $\beta_n = \big(\nabla F_n(\hb_n)\big)^\top\big(\nabla^2F_n(\hb_n)\big)^{-1}\nabla F_n(\hb_n)$.
%
\revision{
Hence, we have
\begin{align}\label{e:apreskantorotoro}
\widetilde{\theta}_n &\ge 
\frac{\|\nabla F_n(\hb_n)\|^4}{\beta_n \beta_n'} \frac{\big(\nabla F_n(\hb_n)\big)^\top \nabla^2 F_n(\hb_n) \nabla F_n(\hb_n)}{\nabla F_n(\hb_n)\big)^\top \Ab_n(\hb_n) \nabla F_n(\hb_n)},
\nonumber\\
&\ge \frac{\|\nabla F_n(\hb_n)\|^4}{\beta_n \beta_n'} \Bigg(\sup_{\substack{\gb \in \eR^N\\\gb \neq \zerob}}\; 
\frac{\gb^\top \Ab_n(\hb_n) \gb}{\gb^\top \nabla^2 F_n(\hb_n) \gb}\Bigg)^{-1},
\end{align}
where $\beta_n' =  \big(\nabla F_n(\hb_n)\big)^\top \nabla^2 F_n(\hb_n) \nabla F_n(\hb_n)$.
The sup term in \eqref{e:apreskantorotoro} corresponds to the generalized Rayleigh quotient of 
$\Ab_n(\hb_n)$ and $\nabla^2 F_n(\hb_n)$, which is equal to $\overline{\kappa}_n$.
By invoking now Kantorovich inequality \cite[Section 1.3.2]{Polak_E_1997_book_Optimization_aca}, we get
\begin{equation}\label{e:Kanto}
\widetilde{\theta}_n \ge \frac{4 \underline{\sigma}_n \overline{\sigma}_n}{\overline{\kappa}_n(\underline{\sigma}_n+\overline{\sigma}_n)^2},
\end{equation}
}
which leads to
\begin{equation}\label{e:kantorobis}
1-\frac{\widetilde{\theta}_n}{1+\epsilon} \le \overline{\theta}_n <1
\end{equation}
since \revision{$\overline{\sigma}_n \geq \underline{\sigma}_n > 0$}.
An upper bound on $\widetilde{\theta}_n$ is derived from \eqref{e:upperboundrate} and \eqref{eq:thetantilde}:
\revision{
\begin{align}\label{e:apreskantoro}
\widetilde{\theta}_n &\le 
\frac{\big(\nabla F_n(\hb_n)\big)^\top \big(\Ab_n(\hb_n)\big)^{-1} \nabla F_n(\hb_n)}{\big(\nabla F_n(\hb_n)\big)^\top\big(\nabla^2 F_n(\hb_n)\big)^{-1}\nabla F_n(\hb_n)}\nonumber\\ 
&\le \sup_{\substack{\gb \in \eR^N\\\gb \neq \zerob}}\; \frac{\gb^\top \big(\Ab _n(\hb_n)\big)^{-1} \gb}{\gb^\top\big(\nabla^2 F_n(\hb_n)\big)^{-1}\gb}.
\end{align}}
The sup term in \eqref{e:apreskantoro} 
is equal to \revision{$\underline{\kappa}_n^{-1}$}.
Altogether \eqref{e:boundinfFvnfinalpre}, \eqref{e:kantorobis}, and \eqref{e:apreskantoro} yield \eqref{e:boundinfFvnfinal}-\eqref{e:uppertheta},
by setting $\theta_n = 1-(1+\epsilon)^{-1}\widetilde{\theta}_n$.
\revision{In view of Assumption \ref{a:conv}\ref{a:conviii} and the equality in \eqref{e:boundHessian}, 
the Hessian of $F_n$ is such that
\begin{equation}\label{e:boundHessian2}
(\forall \hb \in \eR^N)\quad \nabla^2 F_n(\hb)  \preceq \Ab_n(\hb),
\end{equation}
and therefore $\underline{\kappa}_n \ge 1$.}

\bibliographystyle{IEEEbib}
\bibliography{stoch}

\end{document}